\theoremstyle{plain}
\newtheorem{thm}{Theorem}[section]
\newtheorem{lem}[thm]{Lemma}
\newtheorem{prop}[thm]{Proposition}
\theoremstyle{definition}
\newtheorem{defn}[thm]{Definition}
\newtheorem{rmk}[thm]{Remark}
\newcommand{\Spec}{{\rm Spec \,}}
\newcommand{\Gal}{{\rm Gal}}
\newcommand{\A}{{\mathbb A}}
\newcommand{\G}{{\mathbb G}}
\renewcommand{\H}{{\mathbb H}}
\def\NDT{{\fontencoding{T5}\selectfont Nguy\~ \ecircumflex n Duy T\^an}}
\def\NQT{{\fontencoding{T5}\selectfont Nguy\~ \ecircumflex n Qu\'\ocircumflex{}c Th\'\abreve{}ng} } 
\begin{document}
\title{Special unipotent groups are split}
\begin{abstract} 
We show that over any field $k$, a smooth unipotent algebraic $k$-group is special if and only if it is $k$-split.
\end{abstract} 
\keywords{Galois cohomology, unipotent groups}

 \author{ \NDT}
\address{Institute of Mathematics, Vietnam Academy of Science and Technology, 18 Hoang Quoc Viet, 10307, Hanoi - Vietnam } 
\email{duytan@math.ac.vn}
\thanks{NDT is partially supported  by Vietnam National Foundation for Science and Technology Development (NAFOSTED) under grant number 101.04-2017.09}
\maketitle
\section{Introduction}
An {\it algebraic group} over a field $k$ is a $k$-group scheme of finite type over $k$. 
The smooth affine algebraic $k$-groups considered here are the same as linear algebraic groups defined over $k$ in the sense of \cite{Bo}.  
Recall that an affine algebraic $k$-group $G$ is called {\it unipotent} if $G_{\bar{k}}$ (the base change of $G$ to a fixed algebraic closure $\bar{k}$ of $k$) admits a finite composition series over $\bar{k}$ with each successive quotient isomorphic to a $\bar{k}$-subgroup of the additive group $\G_a$.
It is well-known  that an affine algebraic $k$-group $G$ is unipotent if and only if is $k$-isomorphic to a closed $k$-subgroup scheme of the group $T_n$ consisting of upper triangular $n\times n$ matrices with all 1 on the main diagonal, for some $n$. 

A smooth unipotent algebraic group $G$ over a field $k$ is called $k$-{\it split} if either it is trivial or it  admits a composition series by $k$-subgroups with successive quotients are $k$-isomorphic to the additive group $\G_a$. 
We say that $G$ is $k$-{\it wound} if every morphism of $k$-schemes $\A^1_k\to G$ is constant.

Let $G$ be a smooth affine algebraic  group over a field $k$. We say that  $G$ is {\it special}, if
for any field extension $L/k$, every $G$-torsor over $\Spec L$ is trivial, i.e., if for any field extension $L/k$, the Galois cohomology set $H^1(L,G)$ is trivial. Special groups have been introduced by Serre in \cite{Se1}. Over algebraically closed fields, they have been classified by Grothendieck \cite{Gro}.  

Suppose that $G$ is a split smooth unipotent group over a field $k$. By induction on $\dim G$ and the fact that $H^1(L,\G_a)=0$ for every field extension $L$ over $k$, we see that $G$ is special. It is also well-known that over a perfect field $k$, every smooth connected unipotent group $G$ is $k$-split (see e.g. \cite[Chapter V, Corollary 15.5 (ii)]{Bo})). Hence over a perfect field, a  smooth unipotent group is special if and only if it is $k$-split. (Note that a special algebraic group  is always connected \cite[\S 4, Theorem 1]{Se1}.  For the reader's convenience, we will provide an alternative proof for this fact when the group is unipotent, see Lemma~\ref{lem:non-connected}.)
The following result is the main result of this note. 
\begin{thm}
\label{thm:main}
Let $G$ be a smooth unipotent algebraic group over a field $k$. Then $G$ is special if and only if $G$ is $k$-split.
\end{thm}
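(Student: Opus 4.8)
The forward implication (split $\Rightarrow$ special) is already sketched in the introduction, so the plan is to prove that a special $G$ is $k$-split. First I would invoke Lemma~\ref{lem:non-connected} to reduce to the case that $G$ is connected. The organizing idea is the structure theory of smooth connected unipotent groups over an imperfect field: $G$ contains a unique maximal $k$-split smooth connected normal $k$-subgroup $G_{\mathrm{split}}$, and the quotient $W:=G/G_{\mathrm{split}}$ is $k$-wound. Since an extension of a $k$-split group by a $k$-split group is again $k$-split (concatenate composition series), the group $G$ is $k$-split as soon as $W$ is trivial, so the entire problem reduces to showing that specialness of $G$ forces $W=1$.

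The first half of the argument propagates specialness to the quotient $W$. Here one must be careful: quotients of special groups need not be special in general (for example $PGL_n$ is a quotient of the special group $SL_n$), and the essential input is precisely that the kernel $G_{\mathrm{split}}$ is $k$-split. I would prove the dévissage statement that, for a $k$-split smooth connected normal subgroup $V\trianglelefteq G$, the map $H^1(L,G)\to H^1(L,G/V)$ is surjective for every field extension $L/k$. This I would establish by induction on $\dim V$, reducing to the case where $V$ is a central copy of $\G_a$: in that situation the obstruction to lifting a $G/V$-torsor lives in $H^2(L,\G_a)=0$, and the fibres are governed by $H^1(L,\G_a)=0$. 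The reduction to a central $\G_a$ uses that the conjugation action of the unipotent group $G$ on $V$ is by unipotent automorphisms, so that a $G$-stable copy of $\G_a$ can be split off while preserving $k$-splitness of the relevant subquotients and the vanishing of the twisted coefficient cohomology. Granting this, if $G$ is special then for every $L$ the pointed set $H^1(L,W)$ receives a surjection from the trivial set $H^1(L,G)$, so $W$ is special.

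The second and decisive half is to show that a nontrivial $k$-wound smooth connected unipotent group $W$ cannot be special, i.e.\ that it carries a nontrivial torsor over some extension field. Since $W$ is nilpotent, the last nontrivial term of its descending central series is a nontrivial smooth connected central $k$-subgroup, which is $k$-wound (woundness passes to smooth connected subgroups); passing to a one-dimensional smooth connected subgroup of it yields a one-dimensional $k$-wound central subgroup $C$. By Russell's classification such a $C$ is the kernel of an additive ($p$-polynomial) homomorphism $\phi\colon\G_a^2\to\G_a$, giving a short exact sequence $0\to C\to\G_a^2\xrightarrow{\phi}\G_a\to 0$. As $H^1(L,\G_a^2)=0$, the cohomology sequence identifies $H^1(L,C)$ with the quotient group of $L$ by the additive subgroup $\phi_L(L^2)$, where $\phi_L\colon L^2\to L$ is the induced map. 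The point is that woundness of $C$ forces the top-degree part of $\phi$ to be anisotropic, so that $\phi_L$ fails to be surjective for a suitable $L$ (a rational function field adapted to the coefficients of $\phi$); a valuation computation at the relevant place exhibits an explicit element of $L$ that is not a value of $\phi_L$, whence $H^1(L,C)\neq 0$. I would then promote this nonvanishing from $C$ to $W$ through the central sequence $1\to C\to W\to W/C\to 1$, arranging $L$ (or replacing it by a further extension) so that the class survives in $H^1(L,W)$, contradicting the specialness of $W$ established above.

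I expect the genuine obstacle to be this last half rather than the dévissage: producing an honest nontrivial $C$-torsor over an explicit field and verifying the non-surjectivity of $\phi_L$ over an imperfect base is where all the arithmetic of imperfect fields enters, and it is exactly the phenomenon that disappears over perfect fields, where every smooth connected unipotent group is already $k$-split. The bookkeeping in the dévissage — in particular the preservation of $k$-splitness and the vanishing of the twisted $\G_a$-cohomology under the unipotent conjugation action — is routine but must be handled with care, since it is the only thing standing between the true theorem and the false assertion that every quotient of a special group is special. Combining the two halves gives: $G$ special $\Rightarrow$ $W$ special $\Rightarrow$ $W=1$ $\Rightarrow$ $G=G_{\mathrm{split}}$ is $k$-split.
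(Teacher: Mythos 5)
Your overall architecture --- reduce to connected $G$, split off the maximal $k$-split subgroup, transfer specialness to the wound quotient $W$ by a d\'evissage, and then show a nontrivial wound group is not special via an explicit valuation-theoretic computation with $p$-polynomials --- matches the paper's, and your first half is sound (the paper's Lemma~\ref{lem:surj} even gives surjectivity of $H^1(L,G)\to H^1(L,G/U)$ for an \emph{arbitrary} normal unipotent $U$, quoting Oesterl\'e, so your restriction to split kernels is a harmless weakening at that stage). The valuation argument you sketch for a one-dimensional wound $C$ is, in the one-dimensional case, exactly the paper's Lemma~\ref{lem:01}: over $k((t))$ the element $t^{-1}$ cannot be a value of $\phi$ because the principal part of $\phi$ is anisotropic.

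The gap is in your final step, where you ``promote'' the nonvanishing from the central subgroup $C$ up to $W$. The cohomology sequence of $1\to C\to W\to W/C\to 1$ shows that the kernel of $H^1(L,C)\to H^1(L,W)$ is the image of the connecting map $(W/C)(L)\to H^1(L,C)$, and you give no argument that your explicit class in $H^1(L,C)\simeq L/\phi(L\times L)$ avoids this image; ``arranging $L$ (or replacing it by a further extension) so that the class survives'' is precisely the assertion that needs proof, and enlarging $L$ only makes the image of $(W/C)(L)$ larger. Cohomology of unipotent groups propagates well along quotients (surjectivity, Lemma~\ref{lem:surj}) but not along subgroups, and the paper's proof of Lemma~\ref{lem:02} is built around exactly this asymmetry: it inducts on $\dim W$ by passing to the quotient $W/\kappa(W)$ by the cckp-kernel, which is again wound and of strictly smaller positive dimension, so that nontriviality of $H^1(L,W/\kappa(W))$ pulls back to $H^1(L,W)$ by surjectivity; the base case is the commutative $p$-torsion wound case, which Lemma~\ref{lem:01} treats in arbitrary dimension $n$ directly via Tits' presentation of the group as the zero locus of a separable $p$-polynomial in $n+1$ variables with anisotropic principal part (Proposition~\ref{prop:Tits1}), with no need to descend to a one-dimensional subgroup. (Your reduction to a one-dimensional central subgroup also quietly assumes that every nontrivial smooth connected wound group contains a smooth connected one-dimensional $k$-subgroup, which itself requires justification over an imperfect field; the paper's route makes this unnecessary.) To repair your argument you would either have to control the image of $(W/C)(L)$ in $H^1(L,C)$, or restructure the induction to run through quotients as the paper does.
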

Our main result provides an affirmative answer to \cite[Question 1.4]{T} on a simple characterization of special unipotent groups over an arbitrary field.  M. Huruguen  \cite{Hu} provides a general classification result for special reductive groups over an arbitrary field. He also applies this result to obtain explicit classifications for several classes of special reductive groups such as special semisimple groups, special reductive groups of inner type and special quasisplit reductive groups.  R. Achet \cite{A}, as a corollary of his main result, proves Theorem~\ref{thm:main} in the particular case of unipotent groups of dimension 1, by a different method.
\\
\\
{\bf Acknowledgement:} We would like to thank  Michel Brion and \NQT  for interesting discussions and suggestions. Before finding the current proof for Lemma~\ref{lem:01}, Michel Brion showed us a proof for this lemma using a different technique. We are very grateful to him for sharing us his proof. We would also like to thank Rapha\"{e}l Achet for sending us his paper \cite{A}. We are also grateful to the referee for his/her comments and valuable suggestions which we used to improve our exposition. 
\section{Proof of the main result}
\subsection{Some results of Tits on the structure of unipotent algebraic groups}
We first recall some results of Tits concerning the structure  of  unipotent algebraic groups over an arbitrary  (especially imperfect) field of positive characteristic, see \cite[Chapter V]{Oe} and \cite[Appendix B]{CGP}.

Let $G$ be a smooth connected unipotent algebraic group over a field $k$ of characteristic $p>0$. 
There is a maximal $k$-split $k$-subgroup $G_s$, and it enjoys the following properties: it is normal in $G$, the quotient $G/G_s$ is $k$-wound and the formation of $G_s$ commutes with separable (not necessarily algebraic) extensions, see \cite[Chapter V, 7]{Oe} and \cite[Theorem B.3.4]{CGP}. 

Also there exists a maximal central smooth connected $p$-torsion $k$-subgroup of $G$. This group is called the {\it cckp-kernel} of $G$ and denoted by $cckp(G)$ or $\kappa(G)$. Here $\dim (\kappa(G))>0$ if $G$ is not finite. 

The following statements are equivalent:
\begin{enumerate}
\item $G$ is wound over $k$,
\item $\kappa(G)$ is wound over $k$.
\end{enumerate}
If the two equivalent statements are satisfied then $G/\kappa(G)$ is also wound over $k$ (\cite[Chapter V, 3.2]{Oe};  \cite[Appendix B, B.3]{CGP}).

\begin{defn}
 Let $k$ be a field of characteristic $p>0$. A polynomial $P\in k[T_1,\ldots,T_r]$ is a $p$-{\it polynomial} if every monomial appearing in $P$ has the form $c_{ij} T_i^{p^j}$ for some $c_{ij}\in k$; that is $P=\sum_{i=1}^r P_i(T_i)$ with $P_i(T_i)=\sum_{j} c_{ij} T_i^{p^j}\in k[T_i]$. 

A $p$-polynomial $P\in k[T_1,\ldots,T_r]$ is called {\it separable} if it contains at least a non-zero monomial of degree 1.

If $P=\sum_{i=1}^r P_i(T_i)$ is a $p$-polynomial over $k$ in $r$ variables, then the {\it principal part} of $P$ is the sum of the leading terms of the $P_i$.
\end{defn}
\begin{prop}[{see \cite[Ch. V, 6.3, Proposition]{Oe} and \cite[Proposition B.1.13]{CGP}}]
\label{prop:Tits1}
 Let $k$ be an infinite field of characteristic $p>0$. Let $G$ be a smooth  unipotent algebraic $k$-group of dimension $n$. Assume that $G$ is commutative and annihilated by $p$. Then $G$ is isomorphic (as a $k$-group) to the zero scheme of a separable nonzero $p$-polynomial $P$ over $k$. If we assume further that $G$ is $k$-wound then we can choose the polynomial $P$ such that its principal part vanishes nowhere over $k^{n+1}\setminus\{0\}$.
\end{prop}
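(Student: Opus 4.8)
The plan is to realize $G$ as a hypersurface subgroup of an $(n+1)$-dimensional vector group and to read off the $p$-polynomial from the defining equation, deferring the woundness refinement to the end.

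First I would embed $G$ into a vector group and then cut the ambient dimension down to $n+1$. Since $G$ is smooth, connected, commutative and killed by $p$, its base change $G_{\bar k}$ is a product of copies of $\G_a$; in particular its Verschiebung vanishes, and such a group admits a closed $k$-embedding $G\hookrightarrow \G_a^N$ for some $N$. To descend to $N=n+1$ I would project one dimension at a time by group quotients $\G_a^m\twoheadrightarrow \G_a^{m-1}$ whose kernel is a suitably chosen line $K\cong\G_a$. The crucial point is that the group structure confines the ``bad'' directions: a line $K$ for which $G\to\G_a^{m-1}$ fails to be a closed immersion must either meet $G$ in a nonzero point or be tangent to $G$, and because $G$ is a \emph{subgroup} the first condition forces $K$ to lie in the cone over $G$; together these bad directions form a subset of $\P^{m-1}$ of dimension at most $n$, which is proper as soon as $m\ge n+2$. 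Here the hypothesis that $k$ is infinite is exactly what I need to pick a good $k$-rational direction $K$; since a homomorphism of affine $k$-groups with trivial scheme-theoretic kernel is a closed immersion, $G\cap K=1$ yields $G\hookrightarrow \G_a^{m-1}$, and iterating gives a closed immersion $G\hookrightarrow\G_a^{n+1}$ of codimension $1$.

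Next I would extract the $p$-polynomial. Being smooth and connected, $G$ is integral, so as a codimension-one integral closed subscheme of $\G_a^{n+1}=\Spec k[T_1,\dots,T_{n+1}]$ it is a hypersurface $G=V(f)$ for a single irreducible $f$ (the coordinate ring is a UFD). That $f$ may be taken to be a $p$-polynomial is where the group law enters decisively: writing $f(x+y)=a(x,y)f(x)+b(x,y)f(y)$ on $G\times G$ and analysing this functional equation (using $f(0)=0$ and the symmetry $x\leftrightarrow y$) forces $f$ to be additive up to a scalar, hence a $p$-polynomial. Separability is then automatic: the differential of a $p$-polynomial equals its linear part, so if $f$ had no degree-one term then $V(f)$ would be everywhere nonreduced, contradicting the smoothness of $G$; thus $f$ contains a nonzero linear monomial.

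Finally, for the wound refinement I would show that woundness is detected by the principal part $\Psi$ of $P=f$. If $\Psi$ had a nontrivial zero $v\in k^{n+1}\setminus\{0\}$, then, passing to the closure of $G$ in $\P^{n+1}$, this zero produces a $k$-rational point ``at infinity'' in the direction $v$ and hence a nonconstant morphism $\A^1_k\to G$, contradicting woundness; conversely, when $G$ is wound one performs coordinate changes on $\G_a^{n+1}$ to successively simplify $\Psi$ until it is anisotropic. I expect this last step to be the main obstacle, together with the compression in the first paragraph: both are genuinely sensitive to the imperfection of $k$, since the gap between $k$ and $k^p$ is precisely what distinguishes a wound group from a split one — already visible for $n=1$ in Russell's form $y^{p}=x+\alpha x^{p}$ with $\alpha\notin k^{p}$ — and the delicate point is to keep control of this arithmetic while ensuring that the normalized equation still cuts out $G$ scheme-theoretically rather than merely set-theoretically.
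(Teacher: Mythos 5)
First, a point of reference: the paper does not prove this proposition at all --- it is quoted from \cite[Ch.~V, 6.3]{Oe} and \cite[Prop.~B.1.13]{CGP} --- so your attempt can only be compared with the standard proof in those sources. For the first assertion your architecture does match that proof: embed $G$ in a vector group, project generically down to codimension one (this is exactly where the hypothesis that $k$ is infinite enters, and your criterion ``trivial scheme-theoretic kernel $\Rightarrow$ closed immersion'', together with the dimension count on the bad lines, is the right mechanism), then identify the codimension-one subgroup as the zero scheme of a $p$-polynomial whose separability is forced by smoothness. Two caveats. The statement (and the paper's later application of it, via Lemma~\ref{lem:01}, to finite \'etale $p$-torsion groups) does not assume $G$ connected, whereas your very first sentence does; the zero-dimensional and disconnected cases need a separate word. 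More seriously, the step ``the functional equation $f(x+y)=a(x,y)f(x)+b(x,y)f(y)$ forces $f$ to be additive'' is asserted, not argued, and I do not see how to extract additivity from that identity directly. The standard route avoids it entirely: the quotient $\G_a^{n+1}/G$ is a one-dimensional smooth connected quotient of a split unipotent group, hence is $k$-split, hence is $\G_a$; the projection $\G_a^{n+1}\to\G_a$ is a group homomorphism and therefore literally \emph{is} a $p$-polynomial $P$; and $G=\ker P$ scheme-theoretically because the quotient map is faithfully flat, with no need for the UFD/irreducibility detour.

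The genuine gap is in the wound refinement. The implication you propose --- ``the principal part $\Psi$ has a nontrivial zero in $k^{n+1}$, hence $G$ has a $k$-point at infinity, hence $G$ is not wound'' --- is false, because $\Psi$ is not an invariant of $G$: it depends on the chosen presentation. For instance, composing the inclusion of the wound group $\ker(y^p-x-\alpha x^p)\subset\G_a^2$ (with $\alpha\notin k^p$) with the automorphism $(x,y)\mapsto(x,\,y+x^{p^e})$ of $\G_a^2$, $e\ge 1$, presents the same group as the kernel of $y^p+x^{p^{e+1}}-x-\alpha x^p$, whose principal part $y^p+x^{p^{e+1}}$ vanishes at $(1,-1)$. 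So isotropy of the principal part of \emph{a} presentation cannot contradict woundness; the proposition only claims that \emph{some} presentation is anisotropic, and that is precisely what must be constructed. The actual proof is the reduction algorithm you name in one clause and then defer: as long as $\Psi$ has a nontrivial $k$-zero, a substitution of the above type strictly decreases a degree invariant of $P$; the process must terminate; and at termination either $\Psi$ is anisotropic, or $P$ no longer involves some variable (so that $G$ contains a copy of $\G_a$), or $P$ is linear in some variable (so that $G$ is a vector group in that direction) --- both of which woundness excludes. Since you explicitly leave this as ``the main obstacle'', the second assertion of the proposition is not established by your sketch.
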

\subsection{Wound unipotent groups}
In this subsection we always assume that $G$ is a smooth unipotent algebraic group over a field of characteristic $p>0$.
\begin{lem}
\label{lem:01} Suppose that $G$ is nontrivial, $k$-wound, commutative and annihilated by $p$. Then $G$ is not special.
\end{lem}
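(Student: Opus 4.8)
The plan is to realize $G$ explicitly as the kernel of a single $p$-polynomial and then to produce a nontrivial torsor over a Laurent series field by a valuation argument. First I would reduce to the case that $G$ is connected: if $G$ is disconnected it is already not special, since special unipotent groups are connected (Lemma~\ref{lem:non-connected}). A nontrivial connected smooth unipotent $k$-group that is $k$-wound forces $k$ to be imperfect, hence infinite, because over a perfect field every smooth connected unipotent group is split and a nontrivial split group is not wound. Thus Proposition~\ref{prop:Tits1} applies: writing $n=\dim G\geq 1$, there is a $k$-isomorphism of $G$ with the zero scheme of a separable $p$-polynomial $P=\sum_{i=1}^{n+1}P_i(T_i)$ in $n+1$ variables whose principal part $\Phi=\sum_{i=1}^{n+1}c_iT_i^{p^{d_i}}$ (with $c_i\neq 0$ the leading coefficient of $P_i$ and $p^{d_i}$ its top degree) vanishes nowhere on $k^{n+1}\setminus\{0\}$.

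Since $P$ is additive it is a homomorphism $\G_a^{n+1}\to\G_a$ with kernel $G$; comparing dimensions, its image is a $1$-dimensional smooth connected subgroup of $\G_a$, hence all of $\G_a$, so \[ 0\to G\to \G_a^{n+1}\xrightarrow{P}\G_a\to 0 \] is a short exact sequence of smooth $k$-groups. For any field extension $L/k$, using $H^1(L,\G_a)=0$ together with smoothness (so that flat, étale and Galois $H^1$ agree), the long exact sequence gives $H^1(L,G)\cong L/P(L^{n+1})$. Hence it suffices to exhibit one extension $L/k$ and one element $c\in L$ with $c\notin P(L^{n+1})$.

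I would take $L=k((s))$ with its normalized valuation $v$ (residue field $k$) and take $c=s^{-1}$. The key claim is that for every $t=(t_1,\dots,t_{n+1})\in L^{n+1}$ one has either $v(P(t))\geq 0$ or else $p\mid v(P(t))$; since $v(s^{-1})=-1$, this forces $s^{-1}\notin P(L^{n+1})$ and hence $H^1(L,G)\neq 0$. To prove the claim, note that because $n+1\geq 2$, anisotropy of $\Phi$ rules out any purely linear $P_i$, so every $d_i\geq 1$. Consequently, if $v(t_i)=w_i<0$ the lowest-valuation term of $P_i(t_i)$ is $c_it_i^{p^{d_i}}$, of valuation $p^{d_i}w_i\leq -p$ (a multiple of $p$) with leading coefficient $c_i\lambda_i^{p^{d_i}}$, where $\lambda_i\in k$ is the leading coefficient of $t_i$; whereas $v(t_i)\geq 0$ gives $v(P_i(t_i))\geq 0$. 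Writing $\mu=\min_i v(P_i(t_i))$, the case $\mu\geq 0$ is immediate, and when $\mu<0$ the coefficient of $s^{\mu}$ in $P(t)$ equals $\Phi(a)$ for the nonzero vector $a\in k^{n+1}$ with $a_i=\lambda_i$ when $p^{d_i}w_i=\mu$ and $a_i=0$ otherwise; anisotropy then gives $\Phi(a)\neq 0$, so $v(P(t))=\mu$ is divisible by $p$.

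The main obstacle is precisely this last no-cancellation step: since the principal part mixes different exponents $p^{d_i}$, the various $P_i(t_i)$ achieving the minimal valuation need not be comparable, and one must exclude that their leading terms cancel. This is exactly what anisotropy of $\Phi$ over $k$ delivers, once one observes that all leading coefficients $\lambda_i$ lie in the residue field $k$ and that each $d_i\geq 1$ makes every negative valuation of $P_i(t_i)$ a multiple of $p$. Everything else---surjectivity of $P$, the identification of $H^1(L,G)$ with a cokernel, and the reduction to the connected case---is formal.
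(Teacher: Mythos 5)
Your main argument --- realizing $G$ as the kernel of a separable $p$-polynomial $P$ with anisotropic principal part via Proposition~\ref{prop:Tits1}, identifying $H^1(L,G)$ with $L/P(L^{n+1})$, and showing $s^{-1}\notin P(k((s))^{n+1})$ because every element of the image has valuation either $\geq 0$ or divisible by $p$ --- is correct and is essentially the paper's proof. Your formulation of the valuation claim is in fact slightly cleaner than the paper's (which argues by contradiction on the coefficient of $t^{m}$), and you correctly isolate the two points that make it work: each $d_i\geq 1$, and the leading coefficients of the $t_i$ lie in the residue field $k$, where anisotropy of the principal part is available.

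The one genuine defect is the opening reduction to the connected case via Lemma~\ref{lem:non-connected}. In the paper that lemma sits \emph{downstream} of the present one: Lemma~\ref{lem:non-connected} is proved by passing to the finite \'etale quotient $G/G^{\circ}$, then to its Frattini quotient, and then invoking Lemma~\ref{lem:01} for that (disconnected, zero-dimensional) group. So citing Lemma~\ref{lem:non-connected} here is circular, and the circle matters, because the disconnected case of Lemma~\ref{lem:01} is exactly the case the paper later needs. Two repairs are available. Either invoke Serre's theorem that special groups are connected (\cite[\S 4, Theorem 1]{Se1}), which is independent of this paper; or, better, drop the reduction entirely, as the paper does: Proposition~\ref{prop:Tits1} does not assume connectedness, and in the zero-dimensional case ($r=1$) the needed inequality $d_1\geq 1$ follows not from your ``$n+1\geq 2$'' argument but from the nontriviality of $\ker P_1$ (a purely linear separable $P_1$ has trivial kernel). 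With that adjustment your valuation argument goes through verbatim for all $r\geq 1$.
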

\begin{proof} 
Since $G$ is  nontrivial and $k$-wound, this implies  that  the ground field $k$ is imperfect. In particular $k$ is infinite.

By Proposition~\ref{prop:Tits1}, $G$ is $k$-isomorphic to a $k$-subgroup of $\G_a^r$, where $r=\dim G +1$, which is the zero set of a separable $p$-polynomial $P(T_1,\ldots,T_r)\in k[T_1,\ldots,T_r]$, whose principal part $P_{princ}=\sum_{i=1}^r c_i T_i^{p^{m_i}}$ vanishes nowhere over $k^r\setminus \{0\}$. From the exact sequence of $k$-groups
\[
0 \to G\to \G^r_a\stackrel{P}{\to} \G_a\to 0,
\]
one has $H^1(L,G)\simeq L/P(L\times\cdots \times L)$, for every field extension $L$ over $k$. 

Let $k((t))$ be the field of formal Laurent series in  variable $t$, and let $v$ be the $t$-adic valuation on 
$k((t))$. 

{\bf Claim}: $t^{-1}\not \in P(k((t))\times\cdots\times k((t)))$.

{\it Proof of Claim}: Suppose that $t^{-1}=P(\alpha_1,\ldots,\alpha_r)$ for some $\alpha_1,\ldots,\alpha_r\in k((t))$. Then there is some $i$ such that $v(\alpha_i)\leq -1$. We set 
\[
m=\min_{i=1,\ldots,r} v(\alpha_i^{p^{m_i}}).
\] 
Then $m\leq -p<-1$. We set 
\[
I=\{i\mid v(\alpha_i^{p^{m_i}})=m\}\subseteq \{1,\ldots,r\}.
\]
Then the coefficient of $t^m$ in $P(\alpha_1,\ldots,\alpha_r)$ is $\sum_{i\in I} c_i a_i^{p^{m_i}}$, where for each $i\in I$, 
\[\alpha_i= a_i t^{v(\alpha_i)}+ (\text{higher degree terms}),\] and $a_i\not =0$. Hence 
\[
\sum_{i\in I} c_i a_i^{p^{m_i}}=0,\]
which contradicts  the fact that $P_{princ}$ vanishes nowhere over $k^r\setminus \{0\}$.

The above claim implies that $H^1(k((t)),G)$ is nontrivial. Hence $G$ is not special.
\end{proof}

\begin{lem}
\label{lem:surj}
 Let $k$ be a field, $G$ a smooth affine algebraic $k$-group. Let $U$ be a normal unipotent $k$-subgroup of $G$. Then the natural map 
$$\varphi: H^1(k,G)\to H^1(k,G/U)$$
is surjective.

Furthermore, if in addition that $U$ is $k$-split then $\varphi$ is a functorial bijection.
\end{lem}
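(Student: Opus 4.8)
The plan is to establish the two assertions of Lemma~\ref{lem:surj} by exploiting the long exact cohomology sequence associated to the short exact sequence of $k$-groups
\[
1 \to U \to G \to G/U \to 1.
\]
The subtlety is that in the nonabelian setting this sequence of pointed sets is only exact in the sense of pointed sets, so surjectivity of $\varphi$ does not follow formally from exactness; one genuinely needs vanishing of the relevant $H^2$ (or a suitable substitute) or a direct twisting argument. My approach avoids $H^2$ entirely and instead reduces everything to the fact, proved by induction using $H^1(L,\G_a)=0$ for all field extensions $L/k$, that split unipotent groups are special, together with the compatibility of $H^1$ with inner twists.

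First I would prove surjectivity of $\varphi$ by the standard twisting technique. Given a class $[\xi] \in H^1(k, G/U)$, represented by a $(G/U)$-torsor, I would attempt to lift it to a $G$-torsor. The obstruction to lifting lives in a twisted form of $H^2$ of the kernel, but the clean way to organize this for unipotent $U$ is to filter $U$. Since $U$ is unipotent, over $\bar{k}$ it has a composition series with $\G_a$-quotients; however the relevant filtration here should be by \emph{$k$-subgroups} that are \emph{normal in $G$}, which is not automatic. The key observation is that it suffices to treat the case where $U$ is commutative and $p$-torsion by dévissage: one reduces to successive central extensions. For a central subgroup $Z \subseteq U$ that is itself a $k$-vector group (a quotient of $\G_a$'s), the obstruction to lifting a class from $H^1(k, G/Z)$ to $H^1(k,G)$ lands in the abelian group $H^2(k, Z_\sigma)$ computed with respect to the twisted action, and these $H^2$ groups can be controlled. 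The main obstacle will be setting up this dévissage cleanly: I must arrange a filtration of $U$ by $k$-subgroups that are simultaneously normal in $G$ and have successive quotients that are commutative and $p$-torsion, so that at each stage the twisting argument applies and the twisted kernels remain unipotent (hence amenable). I would handle this by passing to the derived series and the $p$-power maps of $U$, noting that characteristic subgroups of a normal subgroup are normal in the ambient $G$, and that these operations produce $k$-subgroups stable under the $G$-action.

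Second, I would prove that when $U$ is in addition $k$-split, $\varphi$ is a bijection, and moreover functorial in $k$. Injectivity is where splitness is used decisively: two $G$-torsors mapping to the same $(G/U)$-torsor differ by an element of $H^1$ of a twisted form ${}_\xi U$ of $U$, and the point is that a twist of a split unipotent group by a torsor is again \emph{special} — indeed, twisting a $k$-split unipotent group by an inner cocycle valued in $G$ yields a group whose $H^1$ over every extension $L/k$ still vanishes, because the split filtration with $\G_a$-quotients is preserved up to passing to twisted $\G_a$'s, each of which is still $\G_a$ (as $\mathrm{Aut}$ acts by scalars and $H^1(L,\G_a)=0$ regardless of the twist). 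Thus the fibers of $\varphi$ are singletons and $\varphi$ is injective; combined with surjectivity it is a bijection. Functoriality in the field extension is then immediate, since every step—the long exact sequence, the twisting, and the vanishing $H^1(L,\G_a)=0$—is natural in $L$. The principal difficulty I anticipate is verifying carefully that a twist of a $k$-split unipotent group remains special: I expect this to follow by the same inductive dévissage as in the surjectivity argument, applied now to the twisted group and using that its split filtration descends to a split filtration of the twist.
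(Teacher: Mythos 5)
The paper itself gives no proof of this lemma: it cites \cite[Ch.~IV, 2.2, Remark 3]{Oe} for surjectivity and \cite[Lemma 7.3]{GM} for the bijection. Your proposal has the right architecture (d\'evissage of $U$ plus obstruction theory for surjectivity; twisting plus specialness of the twisted kernel for injectivity), but it has a genuine gap exactly at the crux of the surjectivity argument. In that part $U$ is \emph{not} assumed $k$-split, so the graded pieces of any characteristic filtration of $U$ are smooth commutative $p$-torsion unipotent groups which may well be $k$-wound; your reduction to ``a central subgroup $Z\subseteq U$ that is itself a $k$-vector group'' is therefore unjustified, and the sentence ``these $H^2$ groups can be controlled'' is precisely the statement that needs proof. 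The missing input is: in characteristic $p>0$ one has $\cd_p(\Gal(k_s/k))\le 1$, hence $H^2(k,M)=0$ for every $p$-primary torsion Galois module, in particular for $M(k_s)$ where $M$ is any smooth commutative unipotent $k$-group (these are killed by a power of $p$) and for all of its twists; in characteristic $0$ unipotent groups are split and one reduces to $H^2(k,\G_a)=0$. With that vanishing in hand the argument closes, and note that the obstruction theory for a commutative (not necessarily central) normal kernel, as in Serre's \emph{Galois Cohomology} I.5.6, only requires commutativity of the kernel, so the derived series of $U$ (characteristic in $U$, hence normal in $G$ and stable under the twisting) already suffices; you do not need to force centrality or $p$-torsion quotients.

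A secondary weak point is your justification that the twisted kernel ${}_\xi U$ remains split when $U$ is $k$-split: ``the split filtration \dots is preserved up to passing to twisted $\G_a$'s \dots as $\mathrm{Aut}$ acts by scalars'' is not right as stated, since neither the conjugation action of $G$ nor the Galois twist need preserve a chosen composition series of $U$ with $\G_a$-quotients. The clean argument is that ${}_\xi U$ is a $k_s/k$-form of $U$, and because the maximal $k$-split $k$-subgroup of a smooth connected unipotent group commutes with separable field extensions (as recalled in Section 2.1 of the paper), any such form of a $k$-split unipotent group is again $k$-split; it is then special by the inductive argument using $H^1(L,\G_a)=0$, the fibers of $\varphi$ are singletons by the twisting exact sequence, and functoriality is immediate. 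This is essentially the content of \cite[Lemma 7.3]{GM}. With these two repairs your outline becomes a correct, self-contained proof.
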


\begin{proof}
See \cite[Chapter IV, 2.2, Remark 3]{Oe} for the first statement.

See \cite[Lemma 7.3]{GM} for the second statement.
\end{proof}

\begin{lem}
\label{lem:02} Suppose that $G$ is connected, nontrivial and $k$-wound. Then $G$ is not special.
\end{lem}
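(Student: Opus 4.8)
The plan is to reduce to the commutative, $p$-torsion case already handled in Lemma~\ref{lem:01} by peeling off the cckp-kernel, and to transport non-speciality upward through quotients by means of the surjectivity in Lemma~\ref{lem:surj}. I would argue by induction on $\dim G$.

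Set $Z=\kappa(G)$. Since $G$ is smooth, connected and nontrivial it has positive dimension, hence is infinite, so $\dim Z>0$ and $Z$ is nontrivial. By construction $Z$ is central in $G$ (so commutative), smooth, connected and annihilated by $p$; and since $G$ is $k$-wound and woundness of $G$ is equivalent to woundness of $\kappa(G)$, the group $Z$ is $k$-wound as well. If $Z=G$, then $G$ itself is nontrivial, $k$-wound, commutative and annihilated by $p$, so Lemma~\ref{lem:01} applies directly and $G$ is not special. This disposes, in particular, of the base case $\dim G=1$, where necessarily $Z=G$.

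Otherwise $Z\subsetneq G$; as both are smooth and connected and $Z$ is a closed $k$-subgroup, this forces $\dim Z<\dim G$. Put $Q:=G/Z$. Then $Q$ is smooth, connected and unipotent with $\dim Q=\dim G-\dim Z>0$, so $Q$ is nontrivial, and $Q=G/\kappa(G)$ is again $k$-wound by the Tits result recalled above. By the inductive hypothesis $Q$ is not special, so there is a field extension $M/k$ with $H^1(M,Q)$ nontrivial. Since $Z$ is a normal unipotent $k$-subgroup of $G$, Lemma~\ref{lem:surj} gives a surjection $H^1(M,G)\twoheadrightarrow H^1(M,Q)$, whence $H^1(M,G)$ is nontrivial and $G$ is not special, completing the induction.

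The step I expect to require the most care is the direction in which non-speciality propagates. It is tempting to exploit the nontrivial central subgroup $Z$ directly, but non-speciality of $Z$ does not by itself yield non-speciality of $G$: in the cohomology sequence attached to $1\to Z\to G\to Q\to 1$, a nontrivial class in $H^1(M,Z)$ could be killed on passing to $H^1(M,G)$. The resolution is to argue through the quotient rather than the subgroup, where Lemma~\ref{lem:surj} guarantees that nontrivial classes always lift. Thus the essential points are exactly that $\kappa(G)$ supplies a nontrivial commutative $p$-torsion $k$-wound group and that woundness descends to $G/\kappa(G)$.
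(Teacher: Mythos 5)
Your proposal is correct and follows essentially the same route as the paper: induction on $\dim G$, with the base case $\kappa(G)=G$ handled by Lemma~\ref{lem:01} and the inductive step passing to the $k$-wound quotient $G/\kappa(G)$ and lifting a nontrivial class via the surjectivity in Lemma~\ref{lem:surj}. Your closing remark about why one must propagate non-speciality through the quotient rather than the central subgroup is accurate and identifies exactly the point where Lemma~\ref{lem:surj} is needed.
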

\begin{proof} 
We proceed by induction on $\dim G$. We first suppose that $G$ is commutative, annihilated by $p$. Then $G$ is not special by Lemma~\ref{lem:01}.

Now we suppose that $G$ is either not commutative or not annihilated by $p$. Let $\kappa(G)$ be the cckp-kernel of $G$. Then by \cite[Appendix B, Proposition B.3.2]{CGP}, $G/\kappa(G)$ is $k$-wound and
\[
0< \dim G/\kappa(G) < \dim G.
\]
(Note that  the quotient $G/\kappa(G)$ is a smooth unipotent algebraic group since $\kappa(G)$ is central hence normal in $G$.)
Hence by the induction hypothesis, $G/\kappa(G)$ is not special.  Thus there exists a field extension $L$ over $k$ such that $H^1(L,G/\kappa(G))$ is nontrivial. On the other hand, by Lemma~\ref{lem:surj} the map 
\[
H^1(L,G)\to H^1(L,G/\kappa(G)) 
\]
is surjective.  This implies that $H^1(L,G)$ is nontrivial. Therefore  $G$ is not special.
\end{proof}
\subsection{Smooth non-connected unipotent groups} In this subsection we provide an alternative proof for the fact that smooth special unipotent group over a field $k$ is connected. Since every smooth unipotent group over a field of  characteristic zero is connected,  we may and shall assume that $k$ is of characteristic $p>0$. Some material in this subsection will be taken from \cite[Section 3]{T}.


Recall that the {\it Frattini subgroup} $\Phi(G)$ of an abstract finite group $G$ is the intersection of the maximal subgroups of $G$. It is a characteristic subgroup, i.e., it is invariant under every automorphism of $G$ and if $G\not= 1$ then $\Phi(G)\not=G$. If $G$ is a $p$-group then $G/\Phi(G)$ is an elementary $p$-group.

To give a finite \'etale $k$-group scheme $G$ is the same as to give a finite abstract group $\G$ with  a continuous action of $\Gal(k_s/k)$ (for example see \cite[6.4, Theorem]{Wa}. 
Note also that via this correspondence the order of $G$ is equal to the order of $\G$ as an abstract group.
Since the Frattini subgroup $\H=\Phi(\G)$ of $\G$ is invariant under the action of $\Gal(k_s/k)$, $\H$ with this Galois action defines a finite $k$-subgroup $H$ of $G$, it is also called the Frattini subgroup of $G$. If $G$ is a finite \'etale group scheme of $p$-power order, then $G/H$ is commutative and annihilated by $p$.

\begin{lem} Let $G$ be a nontrivial finite \'etale  $k$-group scheme of $p$-power order. Then $G$  is not special.
\end{lem}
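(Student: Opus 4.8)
The plan is to reduce, by means of the Frattini subgroup introduced above, to the case of a nontrivial commutative group annihilated by $p$, and then to produce a nonzero cohomology class by an Artin--Schreier computation over a field of Laurent series, in the spirit of the Claim in the proof of Lemma~\ref{lem:01}.

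First I would let $H$ be the Frattini subgroup of $G$, as recalled before the statement. It is a finite \'etale $k$-subgroup of $G$ of $p$-power order, and it is normal in $G$ because the corresponding abstract subgroup $\Phi(\G)$ is a normal $\Gal(k_s/k)$-stable subgroup of $\G$. Moreover $H$ is unipotent: over $\bar{k}$ it becomes the constant group scheme on a finite $p$-group, which admits a composition series with every successive quotient equal to the constant group $\Z/p$, a $k$-subgroup of $\G_a$ via the Artin--Schreier kernel. Hence Lemma~\ref{lem:surj} applies and gives, for every field extension $L/k$, a surjection
\[
H^1(L,G)\surj H^1(L,G/H).
\]
So it suffices to prove that $A:=G/H$ is not special. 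Since $G$ is nontrivial, $\Phi(\G)\neq\G$, so $A$ is nontrivial; and as $G$ is of $p$-power order, $A$ is commutative and annihilated by $p$, as recalled above.

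Next I would kill the Galois action. Over the separable closure $k_s$ the \'etale group $A$ becomes the constant group scheme $(\Z/p)^r$ for some $r\geq 1$, and a constant group scheme stays constant under further base change, so over $L:=k_s((t))$ we have $A_L\cong(\Z/p)^r$. From the Artin--Schreier sequence
\[
0\to \Z/p\to \G_a\stackrel{\wp}{\to}\G_a\to 0,\qquad \wp(x)=x^p-x,
\]
and the vanishing $H^1(L,\G_a)=0$, one obtains $H^1(L,\Z/p)\cong L/\wp(L)$, whence $H^1(L,A)\cong\bigl(L/\wp(L)\bigr)^r$.

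Finally I would check $L/\wp(L)\neq 0$ by showing $t^{-1}\notin\wp(L)$: if $x\in k_s((t))$ has a pole of order $N\geq 1$, then $x^p-x$ has a pole of order $Np\geq p>1$, while if $x$ has no pole then $x^p-x$ has none either; in both cases $x^p-x\neq t^{-1}$. Hence $H^1(L,A)$ is nontrivial, $A$ is not special, and by the surjectivity above $G$ is not special. The step requiring the most care is the reduction itself: one must make sure that $H$ really is a normal unipotent $k$-subgroup, so that Lemma~\ref{lem:surj} is applicable, and that $G/H$ genuinely falls into the commutative, $p$-torsion situation --- this is precisely where the group-theoretic properties of the Frattini subgroup recalled before the statement are used. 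The subsequent Laurent-series computation is then entirely routine.
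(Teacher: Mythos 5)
Your proof is correct, and while its first half coincides with the paper's (reduce via the Frattini subgroup $H$ and the surjection $H^1(L,G)\surj H^1(L,G/H)$ from Lemma~\ref{lem:surj}), the second half takes a genuinely different route. The paper simply observes that $G/H$ is nontrivial, $k$-wound, commutative and annihilated by $p$, and invokes Lemma~\ref{lem:01}, whose proof runs through Tits' presentation of such a group as the zero locus of a separable $p$-polynomial with anisotropic principal part (Proposition~\ref{prop:Tits1}) and a valuation argument over $k((t))$. You instead kill the Galois action by passing to $L=k_s((t))$, identify $(G/H)_L$ with the constant group $(\Z/p)^r$, and produce the nonzero class directly from the Artin--Schreier sequence via $t^{-1}\notin\wp(L)$. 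What the paper's route buys is economy: it reuses a lemma already needed for the wound connected case. What your route buys is robustness and self-containedness in dimension zero: Proposition~\ref{prop:Tits1} is stated for infinite fields, and the opening step of the proof of Lemma~\ref{lem:01} (``nontrivial and $k$-wound implies $k$ imperfect'') is valid for positive-dimensional connected groups but fails for finite \'etale ones (e.g.\ $\Z/p$ over $\F_p$ is wound), so your direct Artin--Schreier computation cleanly covers the perfect and finite base fields where a literal reading of Lemma~\ref{lem:01}'s proof does not apply. Your verifications that $H$ is normal (characteristic, Galois-stable) and unipotent, so that Lemma~\ref{lem:surj} is applicable, are exactly the points that need care, and they are handled correctly.
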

\begin{proof} 
 Let $H$ be the Frattini subgroup of $G$. Then $G/H$ is a nontrivial finite \'etale  $k$-group scheme, which is commutative and annihilated by $p$. Hence $G/H$ is not special by Lemma~\ref{lem:01}.  Thus there exists a field extension $L$ over $k$ such that $H^1(L,G/H)$ is nontrivial. Then Lemma~\ref{lem:surj} implies that  $H^1(L,G)$ is nontrivial, and hence $G$ is not special.
\end{proof}
\begin{lem}
\label{lem:non-connected} Let $G$ be a smooth non-connected unipotent $k$-group. Then $G$ is not special.
\end{lem}
\begin{proof} Let $G^\circ$ be the connected component of $G$. Then $G/G^\circ$ is a nontrivial finite \'etale $k$-group scheme (for example, see \cite[6.7, Theorem]{Wa}. Because $G/G^{\circ}$ is  unipotent, it is also of $p$-power order. Hence $G/G^\circ$ is not special. Thus there exists a field extension $L$ over $k$ such that $H^1(L,G/G^\circ)$ is nontrivial. Then Lemma~\ref{lem:surj} implies that  $H^1(L,G)$ is nontrivial, and hence $G$ is not special.
\end{proof}
\subsection{Proof of Theorem~\ref{thm:main}}
\begin{proof}
We may assume that $k$ is imperfect. 
Suppose that $G$ is special. By \cite[\S 4, Theorem 1]{Se1} or Lemma~\ref{lem:non-connected}, $G$ is connected. 
Let $G_s$ be the $k$-split part of $G$. Then the quotient $G/G_s$ is $k$-wound. On the other hand, Lemma~\ref{lem:surj} implies that $G/G_s$ is  special. Hence $G/G_s$ is trivial by Lemma~\ref{lem:02}. Therefore $G=G_s$ is $k$-split.
\qedhere
\end{proof}
\begin{rmk} One may define that for any group scheme $G$ of finite type over $k$, $G$ is {\it special} if and only if $H^1_{fppf}(K,G)=1$ for every  field extension $K$ over $k$. Then 
\cite[Theorem 1.2]{TV} implies that if $G$ is special, then $G$ is smooth. 
\end{rmk}


\begin{thebibliography}{9999999}
\bibitem[A]{A} R. Achet, {\it 
Picard group of the forms of the affine line and of the additive group},  to appear in  Journal of Pure and Applied Algebra.
\bibitem[Bo]{Bo} A. Borel, {\it Linear Algebraic Groups (2nd ed.)}, Graduate texts in mathematics {\bf 126}, New York: Springer-Verlag 1991.
\bibitem[CGP]{CGP} B. Conrad, O. Gabber and G. Prasad, {\it Pseudo-reductive groups}, Series: New Mathematical Monographs (No. 17), Cambrigde University Press, Cambridge, 2010.
\bibitem[GM]{GM} P. Gille and L. Moret-Bailly, {\it Actions alg\'ebriques de groupes arithm\'etiques},  Torsors, \'etale homotopy and applications to rational points, 231-249, London Math. Soc. Lecture Note Ser., 405, Cambridge Univ. Press, Cambridge, 2013. .
\bibitem[Gro]{Gro} A. Grothendieck, {\it Torsion homologique et sections rationnelles}, Anneaux de Chow et Applications, S\'eminaire Claude Chevalley, 1958, expos\'e n. 5.

\bibitem[Hu]{Hu} M. Huruguen, {\it Special reductive groups over an arbitrary field},  Transform. Groups 21 (2016), 1079-1104.
\bibitem[Oe]{Oe} J. Oesterl\'e, {\it Nombre  de Tamagawa et groupes unipotents en  caract\'eristique $p$},  Invent. Math. {\bf 78} (1984), 13-88.
\bibitem[Se]{Se1} J.-P. Serre, {\it Espaces fibr\'es alg\'ebriques}, Anneaux de Chow et Applications, S\'eminaire Claude Chevalley, 1958, expos\'e n. 1.
\bibitem[T]{T} {\fontencoding{T5} \selectfont Nguy\~ \ecircumflex n}  D. T\^an, {\it On the essential dimension of unipotent algebraic groups}, J. Pure Appl. Algebra 217 (2013), no. 3, 432-448.
\bibitem[TV]{TV} D. Tossici and A. Vistoli, {\it On the essential dimension of infinitesimal group schemes},  Amer. J. Math. 135 (2013), no. 1, 103-114.
\bibitem[Wa]{Wa} W. Waterhouse, {\it Introduction to affine group schemes }, Graduate Texts in Mathematics 66 (1979), Springer-Verlag, New York.
\end{thebibliography}
\end{document}